\definecolor{darkred}{rgb}{1,0,0} 
\definecolor{darkgreen}{rgb}{0,0.8,0}
\definecolor{darkblue}{rgb}{0,0,1}
\def\reflb#1#2{\begingroup
    #2%
    \def\@currentlabel{#2}%
    \phantomsection\label{#1}\endgroup
}
\theoremstyle{definition}
\newtheorem{defn}{Definition}
\theoremstyle{plain}
\newtheorem{theorem}{Theorem}[section]
\newtheorem{lemma}[theorem]{Lemma}
\newtheorem{corollary}[theorem]{Corollary}
\theoremstyle{remark}
\newtheorem{remark}[theorem]{Remark}
\title{On the dynamics characterization of complex projective spaces}
\author{Mita Banik}
\date{\today}
\begin{document}
\subjclass[2010]{53D40, 53D45} 
\keywords{Pseudo-rotations, periodic orbits, quantum homology}
\address{ Department of Mathematics, UC Santa Cruz, Santa
  Cruz, CA 95064, USA} \email{mbanik@ucsc.edu}

\begin{abstract}
    We show that a closed weakly-monotone symplectic manifold of dimension $2n$ which has minimal Chern number greater than or equal to $n+1$ and admits a Hamiltonian toric pseudo-rotation is necessarily monotone and its quantum homology is isomorphic to that of the complex projective space. As a consequence when $n=2$, the manifold is symplectomorphic to $\mathbb{C}P^2$.
\end{abstract}
\maketitle

\section{Introduction}
\smallskip
We show that a closed weakly-monotone symplectic manifold $M^{2n}$, which has minimal Chern number $N \geq n+1$ and admits a Hamiltonian toric pseudo-rotation is necessarily monotone and its quantum homology is isomorphic to the quantum homology of $\mathbb{C}P^n$.

There are several definitions of pseudo-rotations, but roughly speaking pseudo-rotations are Hamiltonian diffeomorphisms with finite and minimal possible number of periodic points. They have been studied in \cite{GG1} and the references therein.

By definition, a pseudo-rotation $\varphi$ is toric if at one of its fixed points the eigenvalue of $D\varphi$ satisfy no resonance relations beyond the conditions that they come in complex conjugation pairs. To be more premise, the requirement is that the semi-simple part of $D\varphi$ topologically generates an $n$-dimensional torus in $Sp(2n)$. For instance, pseudo-rotations obtained by the conjugation method from toric symplectic manifolds are toric. While the toric condition appear generic, in fact the very existence of a toric pseudo-rotation $\varphi$ imposes strong restrictions on the symplectic topology of the manifold $M$. (For example, when $\varphi$ is a toric true rotation, essentially by definition  $M$ is toric). Connections between pseudo-rotations and holomorphic curves in $M$ have been recently studied in \cite{CGG}, \cite{CGG1}, \cite{S1} and \cite{S2}. Our goal in this paper is to further explore the relations between toric pseudo-rotation and the symplectic topology of $M$.

 In \cite{CGG}, it has been have shown that the quantum product is deformed (i.e. not equal to the intersection product) when a weakly-monotone symplectic manifold admits a Hamiltonian toric pseudo-rotation. We use the tools of extremal partitions introduced there to find constraints on the minimal Chern number $N$ for such a manifold and further prove that when $N \geq n+1$, the quantum homology is isomorphic to that of $\mathbb{C}P^n$. As a corollary of our result and \cite{OO1}, \cite{OO2} when $n=2$, $M$ is symplectomorphic to $\mathbb{C}P^2$.

 The organization of the paper is as follows. In section 2, we state the definitions and the main results. The proofs appear in section 4. In section 3 we briefly recall the conventions and preliminaries on symplectic topology and extremal partitions.\\
 
 \textbf{Acknowledgments}  The author would like to thank Viktor Ginzburg for his helpful advice and guidance throughout the writing of this paper and Felix Schlenk for useful remarks.
\medskip
\section{Main results}
\medskip 
The main goal of this paper is to understand the quantum homology of a weakly-monotone symplectic manifold, which admits a toric pseudo-rotation.
\begin{defn}(Pseudo-rotation) A Hamiltonian diffeomorphism $\varphi: M \to M$ is a pseudo-rotation (over the base field $\mathbb{F}$) if $\varphi$ is strongly non-degenerate i.e. all iterates $\varphi^k$ are non-degenerate, and the differential in the Floer complex of $\varphi^k$ over $\mathbb{F}$ vanishes for all $k \in \mathbb{N}$.
    
\end{defn}
\begin{defn} (Toric Pseudo-rotations) A pseudo-rotation $\varphi$ of a closed symplectic manifold $M^{2n}$ is said to be $toric$ if it has a elliptic fixed point $x$ with $\dim \Gamma(x) = n$.
\end{defn}
Here $\Gamma(x)$ is a compact abelian subgroup of $Sp(2n)$ generated by $\Tilde{P}$, where $\Tilde{P}$ is isospectral to $P= D \varphi |_x$ and semisimple. Alternatively, since $P$ is elliptic all the eigenvalues of $P$ lie on the unit circle. Let $\Vec{\theta} := ( \theta_1,...,\theta_n)$ $\in \mathbb{T}^n$ be the collection of Krein-positive eigenvalues of $P$ , ref. \cite{SZ}. The group $\Gamma(x)$ is naturally isomorphic to the subgroup
of the torus $\mathbb{T}^n$ generated by $\Vec{\theta}$. Then the above definition of being a toric pseudo-rotation is equivalent to that the sequence $\{ k\Vec{\theta} $ $|$ $ k \in \mathbb{N} \}$ $\subset \mathbb{T}^n$ is dense in $\mathbb{T}^n$.

Let us now state the main results. The conventions and preliminaries have been reviewed in Section 3. We fix base field $\mathbb{F} =\mathbb{Z}_2$ for our coefficients.
\begin{theorem}\label{2}
Assume that a weakly-monotone symplectic manifold $M^{2n}$ admits a toric pseudo-rotation with minimal Chern number $N \geq n+1$. Then $N = n+1$, $M$ is monotone and the quantum homology $HQ_*(M)$ is isomorphic to $HQ_*(\mathbb{C}P^n)$.
\end{theorem}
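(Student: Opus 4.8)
The plan is to combine three ingredients: the ``perfectness'' of the pseudo-rotation, which pins down the total rank of $HQ_*(M)$; the deformation of the quantum product together with the extremal-partition machinery of \cite{CGG}, which is used to force $N=n+1$; and the resulting rigidity of the ring structure, from which both the identification with $HQ_*(\mathbb{C}P^n)$ and monotonicity are extracted.

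First I would record the homological consequences of $\varphi$ being a pseudo-rotation over $\mathbb{F}=\mathbb{Z}_2$: the Floer differential of every iterate $\varphi^k$ vanishes, so $HF_*(\varphi^k;\mathbb{Z}_2)\cong CF_*(\varphi^k;\mathbb{Z}_2)$, and in particular, via the PSS isomorphism $HF_*(\varphi)\cong HQ_*(M)$, one gets $\dim_{\Lambda} HQ_*(M)=\#\operatorname{Fix}(\varphi)$. Since $HQ_*(M)\cong H_*(M;\mathbb{Z}_2)\otimes\Lambda$ as graded modules, this equals $\sum_i \dim H_i(M;\mathbb{Z}_2)$, which is at least $n+1$ because $[\omega]^k\neq 0$ for $0\le k\le n$. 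In addition I would carry along the action values and mean indices of the fixed points; for the elliptic fixed point $x$ the density of $\{k\vec\theta\}$ in $\mathbb{T}^n$ makes the Conley--Zehnder indices of $x^k$ equidistribute, and it is this equidistribution, fed into the quantum-homology side, that the extremal-partition argument exploits.

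Next, by \cite{CGG} the existence of a toric pseudo-rotation forces the quantum product on $HQ_*(M)$ to be genuinely deformed and attaches to it an extremal partition. Here I would run the partition analysis under the standing hypothesis $N\ge n+1$: a non-classical term $a*_A b$ of the quantum product must lie in a homological degree in $[0,2n]$ while $c_1(A)$ is a positive multiple of $N$, which bounds the number of parts of the partition from above; dually, the $n+1$ linearly independent classes $1,[\omega],\dots,[\omega]^n$ bound the ``total size'' of the partition from below. The two estimates are compatible only when the partition is trivial (a single part) and $N=n+1$. This is the step I expect to be the main obstacle, since it is precisely where the combinatorics of extremal partitions from \cite{CGG} must be pushed hard enough to yield simultaneously the sharp equality $N=n+1$ and the triviality of the partition, rather than merely a one-sided inequality.

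Finally, with $N=n+1$ and a trivial extremal partition the deformation is ``maximal'': $HQ_*(M)$ is generated over $\Lambda$ by a single class $\omega$ of degree $2(n-1)$ with $\omega^{*(n+1)}$ a unit multiple of the fundamental class, which forces $\dim H_{2i}(M;\mathbb{Z}_2)=1$ for $0\le i\le n$ and all odd Betti numbers to vanish; comparing presentations gives $HQ_*(M)\cong HQ_*(\mathbb{C}P^n)$ as graded rings, and incidentally $\#\operatorname{Fix}(\varphi)=n+1$. For monotonicity I would argue that the extremal-partition computation also controls the valuations (action levels) of the PSS images of the $n+1$ fixed points: these turn out to be arithmetically spaced with common difference proportional to $N$, and the requirement that $\varphi^k$ be perfect for every $k$ leaves no room for a strictly larger period lattice. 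Equivalently, the rigid $\mathbb{C}P^n$-type ring structure, in which the Novikov parameter has degree exactly $2N=2(n+1)$ and no finer grading group survives, is incompatible with the existence of spherical classes on which $[\omega]$ and $c_1$ are not proportional; hence $M$ is monotone.
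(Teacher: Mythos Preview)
Your outline misses the paper's central lemma and, as written, does not supply a working substitute. The engine in the paper is the statement that $([pt]\otimes 1)^{*r}\neq 0$ for every $r\ge 1$: at the toric elliptic fixed point one uses density of $\{k\vec\theta\}$ to choose an iterate $m$ with $\mu(\Phi^m)\equiv -n\pmod{2N}$, and because $N\ge n+1$ this forces $[\tilde x^m]=[pt]\otimes 1$ up to a unit of $K_\Gamma$; one then checks directly that $m+\dots+m=rm$ is extremal and invokes Theorem~\ref{1}. A degree count gives $([pt]\otimes 1)^N=[M]\otimes\alpha$ with $\alpha$ invertible, so $[pt]$ is a \emph{unit} in $HQ_*(M)$. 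Everything else is a short degree computation using this invertibility. In particular, $N\le n+1$ is one line: $[pt]*u$ is nonzero (by invertibility) of degree $-2$, and for $N>n+1$ there is no nonzero class in that degree. Likewise $\dim H_{2n-2i}(M)=1$ because for any nonzero $\beta_1,\beta_2\in H_{2n-2i}$ the products $[pt]^i*\beta_j$ are nonzero multiples of $[M]$, and cancelling the unit $[pt]^i$ makes $\beta_1,\beta_2$ proportional. Monotonicity is then immediate from $\dim H_2(M)=1$, with no appeal to action values or period lattices.

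Your step 2 is where the proposal actually breaks down. You describe ``bounding the number of parts from above'' and ``the total size from below'' via $1,[\omega],\dots,[\omega]^n$, but no concrete inequality is stated, and it is unclear what quantity is being partitioned or how the two bounds conflict unless $N=n+1$. The extremal-partition machinery of \cite{CGG} does not by itself produce such a comparison; it produces non-vanishing products, and the paper uses it precisely to manufacture the invertibility of $[pt]$, after which the bound $N\le n+1$ is a degree argument, not a partition-counting one. Similarly, your monotonicity argument via ``arithmetically spaced'' action levels and the absence of a ``finer grading group'' is heuristic: weak monotonicity permits $[\omega]$ and $c_1$ to be independent on $\pi_2(M)$, and nothing in your outline excludes this. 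The paper's route---first prove $\dim H_2=1$, then read off monotonicity---sidesteps the issue entirely. I would reorganize your argument around the invertibility of $[pt]$.
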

\begin{remark}
Note that $N \geq n+1$ in all known examples of closed monotone manifolds and $\mathbb{C}P^n$ is the only such known manifold with $N=n+1$. (However proving this in the symplectic setting for $n>2$ appears to be currently out of reach). However, in Thm \ref{2} the manifold $M$ is not a priori assumed to be monotone and thus can have large $N$. For monotone manifolds admitting a pseudo-rotation, $N \leq 2n$ (\cite{CGG}, \cite{S1}), and $N \leq n+1$ in all known examples of weakly monotone manifolds with pseudo-rotations. Thus Thm \ref{2} establishes in particular the latter fact for toric pseudo-rotations.
\end{remark}

In \cite{OO1}, \cite{OO2} Ohta-Ono proved proved that the diffeomorphism type of any closed monotone
symplectic 4-manifold is $\mathbb{C}P^1 \times \mathbb{C}P^1$ and $\mathbb{C}P^2 \# k  \overline{\mathbb{C}P^2}$ for $0 \leq k \leq 8$ based on the work of McDuff \cite{McD} and Taubes \cite{Tau}. We also have the uniqueness of monotone symplectic structures on $\mathbb{C}P^1 \times \mathbb{C}P^1$ and $\mathbb{C}P^2 \# k \overline{\mathbb{C}P^2}$ for $0 \leq k \leq 8$ (see the survey \cite{Sa}). The quantum homology $HQ_*(M)$ is isomorphic to $HQ_*(\mathbb{C}P^n)$. Thus, as a consequence of our result and the references presented above, we have the following corollary.

\begin{corollary} Assume that a closed connected symplectic 4-manifold with minimal Chern number $N \geq 3$ admits a toric pseudo-rotation. Then $M$ is symplectomorphic to $\mathbb{C}P^2$. \qed
\end{corollary}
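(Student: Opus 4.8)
The plan is to combine Theorem~\ref{2} with the classification of monotone symplectic $4$-manifolds, so that essentially no new work is required. First I would note that the weak-monotonicity hypothesis of Theorem~\ref{2} is automatic here: every closed symplectic manifold of dimension $2n\le 6$ is weakly monotone, the defining condition on spheres of negative Chern number being vacuous once $n\le 3$. Hence $M^4$, which by assumption carries a toric pseudo-rotation with minimal Chern number $N\ge 3=n+1$ (for $n=2$), satisfies all the hypotheses of Theorem~\ref{2}. Applying that theorem yields $N=3$, that $M$ is monotone, and that $HQ_*(M)\cong HQ_*(\mathbb{C}P^2)$; in particular the total $\mathbb{Z}_2$-Betti number of $M$ is $3$, and together with connectedness and Poincar\'e duality this forces $b_1(M)=0$ and $b_2(M)=1$.

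Next I would invoke the Ohta--Ono classification, which rests on McDuff \cite{McD} and Taubes \cite{Tau}: a closed monotone symplectic $4$-manifold is diffeomorphic to $S^2\times S^2$ or to $\mathbb{C}P^2\#k\,\overline{\mathbb{C}P^2}$ with $0\le k\le 8$. Here $S^2\times S^2$ has $b_2=2$ and minimal Chern number $2$, while $\mathbb{C}P^2\#k\,\overline{\mathbb{C}P^2}$ with $k\ge 1$ has $b_2=k+1\ge 2$ and minimal Chern number $1$ (the value $1$ being attained on an exceptional sphere). Therefore the constraints $b_2(M)=1$ and $N=3$ single out $M$ as diffeomorphic to $\mathbb{C}P^2$.

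Finally, to upgrade ``diffeomorphic'' to ``symplectomorphic'' I would appeal to the uniqueness, up to scaling, of monotone symplectic structures on $\mathbb{C}P^2$ --- a consequence of the work of Taubes and McDuff, recorded in the survey \cite{Sa} --- according to which any monotone symplectic form on $\mathbb{C}P^2$ is symplectomorphic to a positive multiple of the Fubini--Study form. Transporting $\omega$ along a diffeomorphism $M\to\mathbb{C}P^2$ produces such a form on $\mathbb{C}P^2$, and hence $(M,\omega)$ is symplectomorphic to $(\mathbb{C}P^2,\lambda\,\omega_{\mathrm{FS}})$ for a suitable $\lambda>0$, which is the intended meaning of the conclusion.

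I do not expect any genuine obstacle at this stage: the entire depth of the argument lives in Theorem~\ref{2} and in the cited four-dimensional results, and the corollary is just an assembly of these inputs. The one point that merits a moment's care is verifying that the invariant used to isolate $\mathbb{C}P^2$ from the Ohta--Ono list is actually determined by the hypotheses; this is immediate, since $HQ_*(M)\cong HQ_*(\mathbb{C}P^2)$ pins down $b_2(M)=1$ and Theorem~\ref{2} pins down $N=3$, and $\mathbb{C}P^2$ is the only manifold on the list with either property.
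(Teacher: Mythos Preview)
Your proposal is correct and follows essentially the same route as the paper: apply Theorem~\ref{2} to conclude that $M$ is monotone with $HQ_*(M)\cong HQ_*(\mathbb{C}P^2)$, invoke the Ohta--Ono classification of closed monotone symplectic $4$-manifolds, single out $\mathbb{C}P^2$ from the list, and then use the uniqueness of monotone symplectic structures from \cite{Sa} to pass from diffeomorphism to symplectomorphism. You supply a couple of details the paper leaves implicit---the automatic weak-monotonicity in dimension $4$ and the Betti-number/minimal-Chern-number check that isolates $\mathbb{C}P^2$---but the architecture is the same.
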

\smallskip
\section{Preliminaries}
\medskip 
Throughout the paper we will assume $(M^{2n},\omega)$ to be a weakly monotone closed symplectic manifold , ref. \cite{HS}. The minimal Chern number, i.e. the positive generator of the group $\langle c_1(TM), \pi_2(M) \rangle \subset \mathbb{Z}$ is denoted by $N$.

A Hamiltonian diffeomorphism is the time-one map $\varphi = \varphi_H$ of the time-dependent flow $\varphi^t_H$ of a 1-periodic in time Hamiltonian $H : S^1 \times M \to \mathbb{R}$.

We briefly discuss the notations and conventions for Floer homology and quantum homology, see \cite{HS} and \cite{EP} for more details. 

Let $H_*(M) := H_*(M,\mathbb{Z}_2)$ and $H^S_2(M,\mathbb{Z})$ be the group of integral spherical homology classes, i.e. the image of the Hurewicz homomorphism $\pi_2(M) \to H_2(M,\mathbb{Z})$.
Set \begin{center}
     $\bar{\pi}_2(M) = H^S_2(M,\mathbb{Z})/ \sim$,
\end{center}
where by definition $A \sim B$ iff $\omega(A) = \omega(B)$ and $c_1(A) = c_1(B)$. Here $\omega(-)$ and $c_1(-)$ are the integrals of $\omega$ and $c_1(TM)$ over the spherical homology classes.

Denote $\Gamma = [\omega](H^S_2(M,\mathbb{Z})) \subset \mathbb{R}$ the subgroup of periods of the
symplectic form on $M$ on spherical homology classes. Let $s$ and $ q$ be formal variables.  Define the field $K_\Gamma$ whose elements are generalized Laurent series in $s$ of the following form:\begin{align}
    K_\Gamma = \{ \sum_{\theta \in \Gamma} z_\theta s^{\theta}, \,  z_\theta \in \mathbb{Z}_2 , \, \#\{\theta > c \mid z_\theta \neq 0 \} < \infty , \, \forall c \in \mathbb{R}  \,  \}.
\end{align}

Define a graded ring  $\Lambda_\Gamma := K_\Gamma[q,q^{-1}]$ by setting the degree of $s$ to be zero and the degree of $q$ to be $2N$. (Note that the latter convention differs from that in \cite{EP} where the deg of $q$ is $2$.)

The (small)\textit{quantum homology} of $M$ is denoted by $HQ_{*}(M)$. This is a graded algebra over the Novikov ring $\Lambda_\Gamma$ (\cite{EP}) and as a $\Lambda_\Gamma$-module $HQ_{*}(M) = H_{*}(M) \otimes_{\mathbb{Z}_2} \Lambda_\Gamma$. The grading on $HQ_*(M)$ is given by the gradings on $H_*(M)$ and $\Lambda_\Gamma$ :\begin{center}
    $\deg(a  \otimes z_\theta s^\theta q^m) = \deg(a) + 2Nm$.
\end{center}

The algebra $HQ_{*}(M)$ is equipped with quantum product: given $a \in H_k(M)$ and $b \in H_l(M)$, their quantum product is a class $a * b \in HQ_{k+l -2n}(M)$ such that \begin{align*}
   a * b = \sum_{A\in \bar{\pi_2}(M)} (a *b)_A \otimes s^{-\omega(A)} q^{-c_1(A)/N}
\end{align*} where $(a * b)_A \in H_{k+l-2n+2c_1(A)}(M)$ is defined by \begin{align*}
    (a * b)_A \circ c= GW^{\mathbb{Z}_2}_A(a,b,c) \; , \forall c \in H_{*}(M).
\end{align*} Here $\circ$ is the intersection product and $GW^{\mathbb{Z}_2}_A(a,b,c)$ denotes the Gromov-Witten invariant.

 The Floer complex and the Floer homology are denoted by  $CF_{*}(\varphi,\Lambda_\Gamma)$ and $HF_{*}(\varphi,\Lambda_\Gamma)$ respectively. Let $\Tilde{P}(H)$ be the free $\mathbb{Z}_2$-module generated by the set of capped one-periodic orbits $\Tilde{x}$ of $H$. Consider the free $\Lambda_\Gamma$-module $\Tilde{P}(H) \otimes_{\mathbb{Z}_2} \Lambda_\Gamma$ and let $R$ be a $\Lambda_\Gamma$-sub module of $\Tilde{P}(H) \otimes_{\mathbb{Z}_2} \Lambda_\Gamma$ generated by $A\#\Tilde{x} \otimes 1- \Tilde{x} \otimes s^{\omega(A)}q^{c_1(A)/N}$, $A \in \bar{\pi}_2(M)$. 
 
 The grading on $\Lambda_\Gamma$ and the grading $\mu$ on $\Tilde{P}(H)$ given by the Conley-Zehnder index give rise to the grading \begin{center}
     $\deg(\Tilde{x} \otimes z_\theta s^\theta q^m) = \mu(\Tilde{x}) + 2Nm$.
 \end{center} 
 Then $\deg(A\#\Tilde{x} \otimes 1) = \deg(\Tilde{x} \otimes s^{\omega(A)}q^{c_1(A)/N}) = \mu(A \#\Tilde{x})$. Hence we get the graded $\Lambda_\Gamma$-module $CF_{*}(\varphi,\Lambda_\Gamma) \; := \Tilde{P}(H) \otimes_{\mathbb{Z}_2} \Lambda_\Gamma$/$R$ and the Floer homology $HF_{*}(\varphi,\Lambda_\Gamma)$ is defined as usual.\\

For a pseudo-rotation we have the following isomorphisms where the first one is natural and the second one is due to \cite{PSS},\begin{center}
   $ CF_{*}(\varphi,\Lambda_\Gamma) \cong HF_{*}(\varphi,\Lambda_\Gamma) \cong HQ_{*}(M)[-n] $.
\end{center} 

\subsection{Extremal Partitions}
Fix a path $\Phi \in \widetilde{Sp}(2n)$ and assume $\Phi$ is elliptic and strongly non-degenerate. We briefly recall extremal partition here, for more details see \cite{CGG}.
\begin{defn}
    A partition $k_1+ k_2 +\dots +k_r = k$ , $k_i \in \mathbb{N}$, of length $r$ is said to be \textit{extremal} (with respect to $\Phi$) if \begin{center}
    $\mu(\Phi^{k_1}) + \dots + \mu(\Phi^{k_r}) - \mu(\Phi^{k}) = (r-1)n$, \\
    \end{center}
    where $\mu$ is the Conley-Zehnder index of the path.
 
\end{defn}
 Let $\Tilde{x}$ be a capped one-periodic orbit of $\varphi$; denote $\Tilde{x}_{k_i} = \Tilde{x}^{k_i}$ which is  a capped periodic orbit for $\varphi^{k_i}$. The only zero-energy pair-of-pants curve from $(\Tilde{x}_{k_1}, \dots , \Tilde{x}_{k_r})$ to $\Tilde{x}_k$ where  $k = k_1+ k_2 +\dots +k_r $, is the constant curve. The moduli space of such curves have virtual dimension zero and the constant curve from $(\Tilde{x}_{k_1}, \dots , \Tilde{x}_{k_r})$ to $\Tilde{x}_k$ is regular when the partition $k = k_1+ k_2 +\dots +k_r $ is extremal. Therefore, \begin{center}
    $\Tilde{x}_{k_1}* \dots* \Tilde{x}_{k_r} = \Tilde{x}_{k} + \dots \neq 0$,
\end{center} where $*$ denotes the pair-of-pants product in global Floer homology.

The above arguments have been summarized by \cite{CGG}, in the following theorem which we recall.
\begin{theorem}\label{1} Let $\Tilde{x}$ be a capped one-periodic orbit of a pseudo-rotation $\varphi$, and let $k = k_1+ k_2 +\dots +k_r$ be an extremal partition of length $r$ with respect to $\Phi =D\varphi^t|_{\Tilde{x}}$. Set $\alpha_i = [\Tilde{x}^k_i \otimes 1] \in HQ_{*}(M)$. Then $|\alpha_i| = n+\mu(\Phi^{k_i})$ and the following holds \begin{center}
    $\alpha_1 * \dots * \alpha_r \neq 0 .$
\end{center} 
\end{theorem}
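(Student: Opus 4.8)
The plan is to read off the degree formula from the grading conventions, and then to establish the nonvanishing by isolating, inside the chain-level pair-of-pants product of the cycles representing the $\alpha_i$, the contribution of a single constant curve to a single basis vector of the Novikov module. I assume $r\ge 2$, the case $r=1$ being immediate. For the degrees: under the identification $CF_{*}(\varphi^{k_i},\Lambda_\Gamma)\cong HQ_{*}(M)[-n]$ recalled above, if $\tilde x^{k_i}$ carries the $k_i$-fold iterate of the cap of $\tilde x$ (so that its Novikov coefficient is $1$), then it has Floer degree $\mu(\Phi^{k_i})$ and hence quantum degree $n+\mu(\Phi^{k_i})$, which is the asserted $|\alpha_i|$. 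Since the quantum product drops degree by $2n$ at each multiplication, $\alpha_1*\cdots*\alpha_r$ has degree $\sum_i\bigl(n+\mu(\Phi^{k_i})\bigr)-2n(r-1)$, and by the extremality relation $\sum_i\mu(\Phi^{k_i})-\mu(\Phi^{k})=(r-1)n$ this equals $n+\mu(\Phi^{k})$, the degree of $[\tilde x^{k}\otimes 1]$. So it is at least consistent to look for $\tilde x^{k}\otimes 1$ as a summand of the product.

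Next I would pass to the chain level. Because $\varphi$ is a pseudo-rotation, the Floer differential of every iterate vanishes, so $\alpha_i$ is represented by the cycle $\tilde x^{k_i}\otimes 1$ and $\alpha_1*\cdots*\alpha_r$ is represented by the pair-of-pants product of these cycles in $CF_{*}(\varphi^{k},\Lambda_\Gamma)$. Expanded in the chosen $\Lambda_\Gamma$-basis of generators-with-caps, the coefficient of $\tilde x^{k}\otimes 1$ is the $\mathbb{Z}_2$-count of rigid pair-of-pants Floer curves whose inputs are asymptotic to $(\tilde x^{k_1},\dots,\tilde x^{k_r})$, whose output is asymptotic to $\tilde x^{k}$, and whose relative spherical class is $A=0$. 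Since all the caps involved are iterates of the cap of $\tilde x$, the actions satisfy $\sum_i\mathcal{A}(\tilde x^{k_i})=\mathcal{A}(\tilde x^{k})$, so any such curve has zero energy, is therefore constant, and can only be the constant curve over the orbit $x$. Curves with $A\ne 0$ contribute instead to $\tilde x^{k}\otimes s^{\omega(A)}q^{c_1(A)/N}$, a different basis vector, so they cannot cancel it; thus everything reduces to showing this one constant curve contributes $1$.

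The remaining step — the one I expect to be the real work — is a transversality statement: the moduli space containing the constant curve is regular. Its virtual dimension is $\mu(\Phi^{k})-\sum_i\mu(\Phi^{k_i})+(r-1)n$, which vanishes precisely in the extremal case, so there the constant curve is isolated with the expected dimension; the content is the further claim that the linearized Cauchy--Riemann operator at this constant curve is surjective exactly when the partition is extremal — an automatic-transversality-type computation, carried out in \cite{CGG}, comparing the cokernel of the linearization with the Conley--Zehnder indices of the asymptotic orbits. Granting it, the constant curve is regular, has no nontrivial domain automorphisms (since $r+1\ge 3$), and is counted with multiplicity $1\in\mathbb{Z}_2$; hence the coefficient of $\tilde x^{k}\otimes 1$ in $\alpha_1*\cdots*\alpha_r$ is $1$, and in particular $\alpha_1*\cdots*\alpha_r\neq 0$.
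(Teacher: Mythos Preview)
Your proposal is correct and follows essentially the same route as the paper: the paper does not prove this theorem in detail but recalls it from \cite{CGG}, giving exactly the sketch you flesh out---the only zero-energy pair-of-pants curve with the iterated caps is the constant one, the extremality condition makes the virtual dimension zero and (by the transversality result of \cite{CGG}) the constant curve regular, so $\tilde x_{k_1}*\cdots*\tilde x_{k_r}=\tilde x_k+\cdots\neq 0$. Your added remarks on the degree shift, the separation of the $A=0$ coefficient from $A\neq 0$ contributions, and the action/energy identity are all consistent with that outline.
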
 
\bigskip
\section{Proofs}
\smallskip
The first step in proving the theorem is showing the invertibility of the $[pt] \otimes 1$ class with respect to the quantum product in $HQ_*(M)$. We begin by establishing the following lemma.
\begin{lemma} Let $\varphi$ be  a toric pseudo-rotation of $M^{2n}$  with minimal Chern number $N \geq n+1 $. Then $([pt] \otimes 1)^r \neq 0$ for all $r \geq 1$.
\end{lemma}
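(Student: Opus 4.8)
The plan is to use Theorem~\ref{1} to produce, for each fixed $r\geq 1$, an exponent $k_0\in\mathbb{N}$ for which the $r$-fold partition $rk_0=k_0+\dots+k_0$ is extremal with respect to $\Phi=D\varphi^t|_{\tilde x}$; here $\tilde x$ is the capped one-periodic orbit at the elliptic fixed point supplied by the toric hypothesis (with an arbitrary fixed capping) and $\vec\theta=(\theta_1,\dots,\theta_n)\in\mathbb{T}^n$ is its rotation vector, so that $\{k\vec\theta:k\in\mathbb{N}\}$ is dense in $\mathbb{T}^n$. Setting $\alpha=[\tilde x^{k_0}\otimes 1]$, Theorem~\ref{1} gives $\alpha^{*r}=\alpha*\cdots*\alpha\neq 0$; and if $\alpha$ is a unit multiple of $[pt]\otimes 1$ in $HQ_*(M)$, say $\alpha=u\cdot([pt]\otimes 1)$ with $u$ a unit of $\Lambda_\Gamma$, then $([pt]\otimes 1)^{*r}=u^{-r}\cdot\alpha^{*r}\neq 0$, since $HQ_*(M)$ is a free $\Lambda_\Gamma$-module. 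As $r$ is arbitrary, this proves the lemma.

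The identification $\alpha=u\cdot([pt]\otimes 1)$ is exactly where $N\geq n+1$ enters. Because $H_*(M)$ is concentrated in degrees $0,\dots,2n$ and $2N>2n$, the part of $HQ_*(M)=H_*(M)\otimes_{\mathbb{Z}_2}\Lambda_\Gamma$ in degree $2Nj$ equals $[pt]\otimes K_\Gamma q^{j}$, a one-dimensional $K_\Gamma$-space spanned by $q^{j}([pt]\otimes 1)$; so any nonzero homogeneous class of degree divisible by $2N$ is a unit times $[pt]\otimes 1$. By Theorem~\ref{1}, $|\alpha|=n+\mu(\Phi^{k_0})$ and $\alpha\neq 0$ (use the length-one partition), so it suffices to arrange $n+\mu(\Phi^{k_0})\equiv 0\pmod{2N}$. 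The Conley--Zehnder index formula for iterates of an elliptic path gives $\mu(\Phi^{k})=\sum_{j=1}^{n}\bigl(2\lfloor k\tilde\theta_j\rfloor+1\bigr)$ whenever $k\tilde\theta_j\notin\mathbb{Z}$, with $\tilde\theta_j\in\mathbb{R}$ the rotation numbers of the path fixed by the capping ($\tilde\theta_j\equiv\theta_j\bmod 1$); putting $\Theta=\tilde\theta_1+\dots+\tilde\theta_n$ (irrational, since $1,\theta_1,\dots,\theta_n$ are rationally independent), the target congruence reduces to $\lfloor k_0\Theta\rfloor\equiv -n\pmod N$, as long as the fractional parts $\{k_0\theta_j\}$ are small enough that $\sum_j\lfloor k_0\tilde\theta_j\rfloor=\lfloor k_0\Theta\rfloor$.

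To find $k_0$, fix $\varepsilon>0$ with $r\varepsilon<1$ and $n\varepsilon<1$, and ask for $k_0$ with $\{k_0\theta_j\}\in(0,\varepsilon)$ for all $j$ and $\{k_0\Theta/N\}\in\bigl[(N-n)/N,\,(N-n+1)/N\bigr)$. The first condition makes $\varphi^{k_0}$ nondegenerate, makes $rk_0=k_0+\dots+k_0$ extremal (no ``carrying'', since $\sum_{i=1}^{r}\{k_0\theta_j\}=r\{k_0\theta_j\}<1$), and yields $\sum_j\lfloor k_0\tilde\theta_j\rfloor=\lfloor k_0\Theta\rfloor$; the second forces $\lfloor k_0\Theta\rfloor\equiv N-n\equiv -n\pmod N$. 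Such a $k_0$ exists by equidistribution: the closure of $\{k(\vec\theta,\Theta/N):k\in\mathbb{N}\}$ in $\mathbb{T}^{n+1}$ is the connected $n$-torus $H=\{(\vec x,y):Ny\equiv x_1+\dots+x_n\bmod 1\}$ (the relations of $(\vec\theta,\Theta/N)$ being generated, as a subgroup of $\mathbb{Z}^{n+1}$, by the primitive vector $(-1,\dots,-1,N)$), and $H$ meets the prescribed box in a nonempty relatively open set --- namely near the points $\vec x\in(0,\varepsilon)^n$, $y=(N-n)/N+N^{-1}\sum_j x_j$ --- so the dense orbit enters it.

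The step I expect to be the main obstacle is exactly this coordination of constraints on $k_0$: extremality of the $r$-fold partition wants $k_0\vec\theta$ extremely close to $0$ in $\mathbb{T}^n$, while landing $\alpha$ in a degree where $HQ_*(M)$ is one-dimensional over $K_\Gamma$ wants a sharp congruence on the integer part of $k_0\Theta$, and these cannot be imposed separately since $\Theta\bmod 1$ is already pinned down by $\vec\theta$. The resolution is that dividing by $N$ breaks this link just enough: the relevant orbit in $\mathbb{T}^{n+1}$ still equidistributes on a genuine $n$-torus that does meet the target box. Everything else is routine Conley--Zehnder bookkeeping --- cappings, rotation numbers, and the normalization $(r-1)n$ in the definition of extremal partition --- handled with the conventions of Section~3 and the computations in \cite{SZ} and \cite{CGG}.
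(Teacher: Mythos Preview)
Your proposal is correct and follows the same strategy as the paper's proof: produce an iterate $k_0$ for which the $r$-fold partition $rk_0=k_0+\dots+k_0$ is extremal and $\mu(\Phi^{k_0})\equiv -n\pmod{2N}$, so that (using $N\geq n+1$) the class $[\tilde x^{k_0}\otimes 1]$ is a unit multiple of $[pt]\otimes 1$, and then apply Theorem~\ref{1}. The only difference is in how $k_0$ is located---the paper does a two-step iteration tracking the loop/short-path decomposition of $\Phi$, while you package both constraints into a single equidistribution argument for $k(\vec\theta,\Theta/N)$ on the sub-torus $H\subset\mathbb{T}^{n+1}$; both arguments ultimately rest on the same density of the toric orbit.
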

\begin{proof}
Let $\Phi$ be the linearized flow $D\varphi^t|_{\Tilde{x}}$ along a capped orbit $\Tilde{x}$ such that $\dim\Gamma(x)=n$, i.e. $\Gamma(x) = \mathbb{T}^n$.

When $\Phi(1)$ is semi-simple we can decompose $\Phi$ as a product $\phi \xi$ where $\phi$ is a loop and $\xi$ is a direct sum of $n$ ``short paths" $t \mapsto exp(\pi \sqrt{-1} \lambda t)$ where $t \in [0,1)$ and $|\lambda| < 1$, ref, [\cite{GG}, Sect. 4]. We set $loop(\Phi) := \hat{\mu}(\phi)$ where $\hat{\mu}(\phi)$ is the mean index, which is twice the Maslov index of $\phi$. For any iteration $\Phi^k(1)$ we have $\mu(\Phi^k) = k \, loop(\Phi) + \mu(\xi^k).$

All the eigenvalues of $\Phi(1)$  are necessarily distinct and in particular $\Phi(1)$ is semi-simple. Therefore, $\Phi(1)^k$ $k \in \mathbb{N}$ is dense in $\mathbb{T}^n$. For some $l \in \mathbb{N}$, we can choose $\Phi(1)^l$ to be sum of small rotations $exp(\pi \sqrt{-1} \theta_i)$ such that $\theta_i <0$ for all $i =1,...,n$ and $\theta_i$'s are very close to each other. Iterating again to bring $exp(\pi \sqrt{-1} m\theta_i)$ close to $1 \in S^1$. The choice of $``m$'' is such that the $loop(\Phi^m) = -2n + d $ (with $ 2N | d$) and if $\lambda_i$'s be the end points of the ``short paths" in  $\xi^m$ then we have $\lambda_i >0$ and small. We have 
\begin{center}
    $\mu(\Phi^m) = -2n + d + n = -n + d$.
\end{center}
Also ensuring $r$max$|\lambda_i|<2$ we have, \begin{center}
    $\mu(\Phi^{rm}) = r(-2n+d) + n$.

\end{center}
And $m+...+m = rm$ is an extremal partition since \begin{center}
    $r\mu(\Phi^m) - (r-1)n = r(-n +d) - (r-1)n= r(-2n +d ) +n =  \mu(\Phi^{rm})$.
\end{center}
Since $\mu(\Phi^m)= \mu(\Tilde{x}^m) = -n $ (mod $ 2N)$  therefore for some element $\lambda \in K_\Gamma$, $ \Tilde{x}^m \otimes \lambda = [pt] \otimes 1$  (since $N \geq n+1$). Thus $([pt] \otimes 1)^r \neq 0$ by Theorem \ref{1}.
\end{proof}
\begin{corollary}
Assume that a weakly-monotone symplectic manifold $M^{2n}$ admits a toric pseudo-rotation and $N \geq n+1$, then $[pt \otimes 1]$ is invertible and the $[pt \otimes 1]$ class satisfies the following conditions:\begin{align}
    ([pt] \otimes 1)^N =  [M] \otimes \alpha ,                                                      
\end{align} where $\alpha$ is invertible in $K_\Gamma$ and $deg(\alpha) = -2Nn$.
\end{corollary}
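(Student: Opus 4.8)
The plan is to deduce the statement from the preceding Lemma by a grading count, after which the two invertibility claims become formal. By the Lemma, $([pt]\otimes 1)^r\neq 0$ for all $r\ge 1$; in particular $([pt]\otimes 1)^N\neq 0$. Since $[pt]\in H_0(M)$ the class $[pt]\otimes 1$ has degree $0$, and since the quantum product sends $H_k(M)\otimes H_l(M)$ into $HQ_{k+l-2n}(M)$, induction on $r$ gives $([pt]\otimes 1)^r\in HQ_{-2n(r-1)}(M)$; thus $([pt]\otimes 1)^N$ lies in the single graded piece $HQ_{-2n(N-1)}(M)$.

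Next I would pin down that graded piece. Since $HQ_*(M)=H_*(M)\otimes_{\mathbb{Z}_2}\Lambda_\Gamma$ with $\deg(a\otimes z_\theta s^\theta q^m)=\deg(a)+2Nm$, a nonzero homogeneous element of $HQ_d(M)$ is supported on homology classes $a$ with $0\le\deg(a)\le 2n$ and $\deg(a)\equiv d\pmod{2N}$. For $d=-2n(N-1)$ we have $d\equiv 2n\pmod{2N}$, and the hypothesis $N\ge n+1$ — equivalently $2N>2n$ — leaves $\deg(a)=2n$ as the only possibility in the range $[0,2n]$. As $M$ is closed and connected, $H_{2n}(M;\mathbb{Z}_2)=\mathbb{Z}_2\,[M]$, so $([pt]\otimes 1)^N=[M]\otimes\alpha$ for a unique (homogeneous) $\alpha\in\Lambda_\Gamma$, and comparing degrees gives $\deg(\alpha)=-2n(N-1)-2n=-2Nn$.

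It then remains to check the invertibility assertions. A homogeneous element of $\Lambda_\Gamma=K_\Gamma[q,q^{-1}]$ of degree $-2Nn$ has the form $\alpha=\alpha_0\,q^{-n}$ with $\alpha_0\in K_\Gamma$ of degree $0$; since $([pt]\otimes 1)^N\neq 0$ forces $\alpha_0\neq 0$ and $K_\Gamma$ is a field, $\alpha_0$ — and hence $\alpha$ — is invertible in $\Lambda_\Gamma$. Finally, the fundamental class $[M]\otimes 1$ is the unit of the quantum product, so $[M]\otimes\alpha=\alpha\cdot([M]\otimes 1)$ is a unit with inverse $[M]\otimes\alpha^{-1}$; consequently $([pt]\otimes 1)^{N-1}*\bigl([M]\otimes\alpha^{-1}\bigr)$ is a two-sided inverse of $[pt]\otimes 1$, which gives the invertibility of $[pt]\otimes 1$.

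I do not expect a genuine obstacle here; the one step needing care is the grading count, where $N\ge n+1$ is used precisely to rule out every homology class other than $[M]$ contributing to $HQ_{-2n(N-1)}(M)$. If $2N\le 2n$ intermediate-degree classes could enter and the clean identity $([pt]\otimes 1)^N=[M]\otimes\alpha$ would break down; granted the Lemma and the graded ring structure of $HQ_*(M)$, everything else is bookkeeping.
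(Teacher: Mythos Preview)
Your proof is correct and follows essentially the same approach as the paper: use the Lemma to get $([pt]\otimes 1)^N\neq 0$, compute its degree as $-2n(N-1)$, invoke $N\ge n+1$ to force the only contributing homology class to be $[M]$, and then observe that $\alpha=\alpha_0 q^{-n}$ with $\alpha_0\in K_\Gamma$ nonzero, hence a unit. You spell out the grading count and the invertibility of $[pt]\otimes 1$ more explicitly than the paper does, but the argument is the same.
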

\begin{proof}
By the previous lemma we have $([pt] \otimes 1)^N \neq 0$. We have deg($[pt] \otimes 1)^N$ =$-2n(N-1)$ and since $N \geq n+1$, therefore $([pt] \otimes 1)^N =  [M] \otimes \alpha $ where $deg(\alpha) = -2Nn$. The class $\alpha$ is of the form $(\sum_{\theta \in \Gamma} z_\theta s^{\theta})q^{-n}$. Therefore $\alpha$ is invertible since $K_{\Gamma}$ is a field.
\end{proof}
The invertibility of the $[pt] \otimes 1$ class is crucial to the arguments below. By doing a simple degree analysis we immediately get some obstructions to the minimal Chern number. For the later part of the theorem invertibility gives us uniqueness of the homology classes. \\ \\
\medskip
\textbf{Proof of Theorem \ref{2}} 
\begin{proof}
Let us begin by proving that when a weakly-monotone $M^{2n}$ admits a toric pseudo-rotation, then $N \leq n+1$. We recall from \cite{CGG} that when $M$ admits a toric pseudo-rotation there is a non zero class $ u \in H_{2n-2}(M)$ such that $([u] \otimes 1)^r \neq 0$ for every $r \geq 1$. Since $[pt]\otimes 1$ satisfies (2), $([pt] \otimes 1)*([u] \otimes 1) \neq 0$. By doing degree computation we see $deg(([pt] \otimes 1)*([u] \otimes 1)) = -2$ thus if $N> n+1$, then $([pt] \otimes 1)*([u] \otimes 1)= 0$ which is a contradiction.\\

For the final part of the proof we will show that when $N = n+1$, then \begin{center}
    $ \dim(H_{2n-2i}(M)) = 1$ for $1 \leq i \leq n$.
\end{center}

Let us first establish the result for  $i=n-1$. \\

Let $u_1, u_2$ be two non-zero classes in $H_{2}(M)$, consider $([pt] \otimes 1)^{n-1}*([u_i] \otimes 1)$. Now $deg(([pt] \otimes 1)^{n-1}*([u_i] \otimes 1))= 2n + -2(n+1)(n-1)$ and $([pt] \otimes 1)^{n-1}*([u_i] \otimes 1) \neq 0$ since $[pt]\otimes 1$ satisfies (2). Therefore $([pt] \otimes 1)^{n-1}*([u_i] \otimes 1) =  [M] \otimes \lambda_i$ for some invertible element $\lambda_i \in K_\Gamma$.

Multiplying $[pt]^2 \otimes 1$  with both sides and using invertibility of $\alpha$
we obtain $[u_1] \otimes 1 = [u_2] \otimes \lambda_2^{-1}\lambda_1 $. This shows the classes $[u_1]$ and $[u_2]$ are linearly dependent, hence the dimension of $H_{2}(M)$ is 1. This also implies that $\dim H_2(M,\mathbb{Z}) =1$ and thus $M$ is monotone.\\

Let class $A_0$ be the obvious generator for $H^S_2(M,\mathbb{Z})$. We set $q' = (s^{-\omega(A_0)}q^{-1})$ with $deg(q') = -2(n+1)$ We will rename $q'$ by $q$ which is the generator of the Novikov ring and denote $A\otimes \alpha$ by $\alpha A$ for $\alpha \in K_\Gamma$ and $A \in H_{*}(M)$.\\

Now let us prove the result for $i > 1$. We have $u^i \neq 0 $ with $deg(u^i) = 2n -2i$. Let $\beta$ be another non-zero class in  $H_{2n-2i}(M)$, then $deg([pt]^i * \beta) = 2n -2i -2ni = 2n -2i(n+1) $ and thus $[pt]^i * \beta = \lambda q^i M$. By similar arguments as above and using (2), $\beta$ and $u^i$ are linearly dependent and hence the dimension of  $H_{2n-2i}(M)$ is $1$. \\

 So by above arguments it follows that  $HQ_*(M$) is generated by $u \in H_{2n-2}(M)$. The identity $u^{n+1} = q[M]$ readily follows since  $deg(u^{n+1})= -2 = -2(n+1) + 2n$, and the theory of extremal partition asserts the coefficient is $1$. This establishes the isomorphism with that of the quantum homology of $\mathbb{C}P^n$.
\end{proof}
\medskip


\begin{thebibliography}{CKRTZ}

\bibitem[CGG19a]{CGG} E. \c Cineli, V.L. Ginzburg, B.Z. G\"urel, Pseudo-rotations and holomorphic curves, Preprint Arxiv:1905.07567.
 
\bibitem[CGG19b]{CGG1} E. \c Cineli, V.L. Ginzburg, B.Z. G\"urel, From pseudo-rotations to holomorphic curves via quantum Steenrod squares, Preprint Arxiv:1909.11967.

\bibitem[EP08]{EP} M. Entov, L. Polterovich, Symplectic quasi-states and semi-simplicity of quantum
              homology, in \emph{Toric Topology}, \textbf{460}, Contemp. Math. Amer. Math. Soc., Providence, RI, 2008, 47--70.
              
\bibitem[GG18a]{GG1} V.L. Ginzburg, B.Z. G\"urel, Hamiltonian
  pseudo-rotations of projective spaces, \emph{Invent.\ Math.},
  \textbf{214} (2018), 1081--1130.
 
\bibitem[GG18b]{GG} V.L. Ginzburg, B.Z. G\"urel, Pseudo-rotations
  vs.\ rotations, Preprint ArXiv:1812.05782.
  
\bibitem[HS95]{HS} H. Hofer, D. Salamon, Floer homology and Novikov
  rings, in \emph{The Floer Memorial Volume}, Progr.\ Math., vol.\
  133, Birkh\"auser, Basel, 1995, 483--524.
 
 

\bibitem[McD90]{McD} D. McDuff, The structure of rational and ruled symplectic {$4$}-manifolds, \emph{Journal of the American Mathematical Society}, \textbf{3} (1990), 679--712.

\bibitem[OO96]{OO1} H. Ohta, K. Ono, Notes on symplectic {$4$}-manifolds with {$b^+_2=1$}. {II}, \emph{International Journal of Mathematics}, \textbf{7} (1996), 755--770.

\bibitem[OO97]{OO2} H. Ohta, K. Ono, Symplectic {$4$}-manifolds with {$b^+_2=1$}, \emph{Geometry and Physics ({A}arhus, 1995)}, Vol. 184, Lecture Notes in Pure and Appl. Math. Dekker, New York, 1997, 237--244.

\bibitem[PSS96]{PSS} S. Piunikhin, D. Salamon, M. Schwarz, Symplectic
  Floer--Donaldson theory and quantum cohomology, in \emph{Contact and
    Symplectic Geometry (Cambridge, 1994}), Publ.\ Newton Inst., vol.\
  8, Cambridge University Press, Cambridge, 1996, 171--200.

\bibitem[Sa13]{Sa} D. Salamon, Uniqueness of symplecic structures, \emph{Acta Mathematica Vietnamica}, \textbf{38} (2013), 123--144.


\bibitem[Sh19a]{S1} E. Shelukhin, Pseudorotations and Steenrod squares, Preprint Arxiv:1905.05108.

\bibitem[Sh19b]{S2} E. Shelukhin, Pseudorotations and Steenrod squares revisited, Preprint Arxiv:1909.12315.


\bibitem[SZ92]{SZ} D. Salamon, E. Zehnder, Morse theory for periodic
  solutions of Hamiltonian systems and the Maslov index, \emph{Comm.\
    Pure Appl.\ Math.}, \textbf{45} (1992), 1303--1360.

\bibitem[Tau00]{Tau} C. H. Taubes, Seiberg {W}itten and {G}romov invariants for symplectic {$4$}-manifolds, \emph{First International Press Lecture Series}, Vol. 2, International Press, Somerville, MA, 2000, vi+401.
\end{thebibliography}
\end{document}